\newtheorem{theorem}{Theorem}
\newtheorem*{theorem*}{Theorem}
\newtheorem*{maintheorem*}{Main Theorem}
\newtheorem{lemma}[theorem]{Lemma}
\newtheorem{example}[theorem]{Example}
\newtheorem{definition}[theorem]{Definition}
\newtheorem{remark}[theorem]{Remark}
\newtheorem{prop}[theorem]{Proposition}
\newtheorem{claim}[theorem]{Claim}
\newcommand{\tr}[1]{{\color{black} #1}}
\DeclareMathOperator{\GL}{GL}
\DeclareMathOperator{\SL}{SL}
\DeclareMathOperator{\Aut}{Aut}
\DeclareMathOperator{\M}{M}
\DeclareMathOperator{\rank}{rank}
\DeclareMathOperator{\Lie}{Lie}
\def\CC{\mathbb{C}}
\title{Algebraic Embeddings of $\CC$ into $\SL_n(\CC)$}
\author{Immanuel Stampfli}
\address{Jacobs University Bremen gGmbH, School of Engineering and 
Science, Department of Mathematics, Campus Ring 1, 28759 Bremen, Germany}
\email{immanuel.e.stampfli@gmail.com}
\subjclass[2010]{14R10, 14J50, 32M17}
\thanks{The author gratefully acknowledge support by the Swiss National
Science Foundation Grant ``Automorphisms of Affine Algebraic Varieties" 148627.}
\begin{document}

\begin{abstract}
	We prove that any two algebraic embeddings
	$\CC \to \SL_n(\CC)$ are the same up to an algebraic automorphism of 
	$\SL_n(\CC)$,
	provided that $n$ is at least $3$. Moreover, we prove that 
	two algebraic embeddings $\CC \to \SL_2(\CC)$ are the same up
	to a holomorphic automorphism of $\SL_2(\CC)$.
\end{abstract}

\maketitle

\section{Introduction}

There are many results concerning algebraic embeddings of some variety
into the affine space $\CC^n$. Let me recall two of them.
Any two algebraic embeddings of a smooth \tr{affine}
variety $X$ into $\CC^n$ are the same up to an algebraic automorphism of $\CC^n$,
provided that $n > 2 \dim X+1$. This result is due to 
Nori, Srinivas \cite{Sr1991On-the-embedding-d}
and Kaliman \cite{Ka1991Extensions-of-isom}. If one relaxes
the condition that the automorphism of $\CC^n$ must be algebraic,
Kaliman \cite{Ka2013Analytic-extension} and independently, Feller and the author 
\cite{FeSt2014Holomorphically-Eq} proved the following improvement:
Any two algebraic embeddings of a smooth \tr{affine}
variety $X$ into $\CC^n$ 
are the same up to a \emph{holomorphic} automorphism of $\CC^n$,
provided that $n > 2 \dim X$.

As a further development of these results, we study algebraic embeddings of 
$\CC$ into $\SL_n$. This article can be seen as a first example
to understand algebraic embeddings \tr{of a curve} into an arbitrary 
affine algebraic variety with a large automorphism group.

\tr{In dimension zero, Arzhantsev, Flenner, Kaliman, Kutzschebauch and Zaidenberg 
proved that 
two embeddings of a finite set into any irreducible smooth 
affine flexible variety $Z$ are the same 
up to an algebraic automorphism of $Z$, provided that $\dim Z > 1$
\cite{ArFlKa2013Flexible-varieties}. 
Our main result is based on this work.}

\begin{maintheorem*}[see Theorem~\ref{thm.SLn} and Theorem~\ref{thm.SL2}]
	Let $f, g \colon \CC \to \SL_n$ be algebraic embeddings.
	If $n \geq 3$, then $f$ and $g$ are the same
	up to an algebraic automorphism of $\SL_n$
	and if $n = 2$, then $f$ and $g$ 
	are the same up to a holomorphic automorphism
	of $\SL_n$. 
\end{maintheorem*}

To the author's knowledge it is not known, whether all algebraic 
embeddings $\CC \to \SL_2$ are the same up to an algebraic automorphism
of $\SL_2$. Also for algebraic embeddings $\CC \to \CC^3$
it is an open problem, whether all these embeddings are the same up to 
an algebraic automorphism of $\CC^3$, 
see \cite{Sh1992Polynomial-represe} for potential
examples that are not equivalent to linear embeddings.

In fact, in a certain sense the class of algebraic embeddings $\CC \to \SL_2$ 
is as big as the class of algebraic \tr{embeddings} $\CC \to \CC^3$.
More precisely, the following holds. \tr{If 
$g \colon \CC \to \CC^3$, $t \mapsto (g_1(t), g_2(t), g_3(t))$
is an algebraic embedding,} 
then one can apply a (tame) algebraic automorphism of $\CC^3$
such that  afterwards the polynomial $g_2$ divides $g_1 g_3 - 1$ and thus the
following map is an algebraic embedding
\[
	\CC \longrightarrow \SL_2 \, , \quad t \longmapsto
	\begin{pmatrix}
		g_1(t) & (g_1(t) g_3(t) - 1) / g_2(t) \\
		g_2(t) & g_3(t) 
	\end{pmatrix}	\, .
\]   
\tr{The construction of the claimed (tame) algebraic 
automorphism of $\CC^3$ can be 
seen as follows. First one can 
apply a map of the form $(x, y, z) \mapsto (x, y + \lambda, z)$ 
such that afterwards the polynomial
$g_2$ has only finitely many simple roots, say $t_1, \ldots, t_n$.
Now, it is enough to apply some (tame) algebraic automorphism
of the form $(x, y, z) \mapsto (\varphi_1(x, z), y, \varphi_3(x, z))$, which sends the
points $g(t_1), \ldots, g(t_n)$ to the curve $\{ xz = 1, y = 0 \} \subseteq \CC^3$,
see \cite[Lemma~5.5]{KaZa1999Affine-modificatio}.}

\tr{The proof of the main theorem gives a method to construct
the claimed automorphism. However, the proof does not produce 
a computer algorithm that would give such an automorphism. This is 
because the construction in the proof depends on certain zero sets of polynomials.} 

\section{\texorpdfstring{Algebraic automorphisms of $\SL_n$}
{Algebraic automorphisms of SLn}}

\tr{Let us introduce first some notation.
For $i, j$ in $\{1, \ldots, n \}$, we denote the $ij$-th entry of a matrix $X \in \SL_n$ 
by $X_{ij}$. The projection $\SL_n \to \CC$, $X \to X_{ij}$
we denote by $x_{ij}$.}

In the first lemma, we list algebraic automorphisms of $\SL_n$
that we use constantly. The proof is straight forward.

\begin{lemma}
	\label{lem.autos1}
	Let $n \geq 2$ and let $i \neq j$ be integers in $\{ 1, \ldots, n \}$.
	Then, for every polynomial $p$ in the \tr{functions} $x_{kl}$, $k \neq i$,
	the map
	\[
		\SL_n \longrightarrow \SL_n \, , \quad
		X \longmapsto E_{ij}(p(X)) \cdot X
	\]
	is an automorphism, where \tr{$E_{ij}(a)$}
	denotes the elementary matrix with $ij$-th entry equal to \tr{$a$}. 
	Similarly, for every polynomial $q$ in the \tr{functions} $x_{kl}$, $l \neq j$,
	the map
	\[
		\SL_n \longrightarrow \SL_n \, , \quad
		X \longmapsto X \cdot E_{ij}(q(X))
	\]
	is an automorphism.
\end{lemma}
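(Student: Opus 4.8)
The plan is to exhibit an explicit inverse and check that everything in sight is polynomial. First I would record the determinant and the effect of the map on matrix entries. Since $i \neq j$, the elementary matrix $E_{ij}(p)$ is triangular with $1$'s on the diagonal, so $\det E_{ij}(p) = 1$; hence $\det(E_{ij}(p) \cdot X) = \det X = 1$, which shows the map really takes $\SL_n$ into $\SL_n$. Its entries are polynomials in the $x_{kl}$, so it is a morphism of affine varieties.

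The key observation concerns which entries change. Left multiplication by $E_{ij}(p)$ performs the row operation that adds $p$ times the $j$-th row to the $i$-th row, leaving every other row untouched: writing $Y = E_{ij}(p) \cdot X$, one has $y_{kl} = x_{kl}$ for all $k \neq i$ and $y_{il} = x_{il} + p \, x_{jl}$. By hypothesis $p$ is a polynomial in the variables $x_{kl}$ with $k \neq i$, that is, in exactly those entries that the map preserves. Consequently $p(Y) = p(X)$. This invariance is the heart of the argument, and it is precisely where the condition $k \neq i$ enters.

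Granting this, I would propose the inverse $Y \mapsto E_{ij}(-p) \cdot Y$ and verify the two compositions using the additivity law $E_{ij}(a) E_{ij}(b) = E_{ij}(a+b)$, valid exactly because $i \neq j$. Concretely, for $Y = E_{ij}(p(X)) \cdot X$ the invariance gives $p(Y) = p(X)$, so that $E_{ij}(-p(Y)) \cdot Y = E_{ij}(-p(X)) \, E_{ij}(p(X)) \cdot X = X$; the reverse composition is identical. Since both the map and its proposed inverse have polynomial entries, the map is an algebraic automorphism of $\SL_n$.

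The second statement is proved symmetrically: right multiplication by $E_{ij}(q)$ is the column operation adding $q$ times the $i$-th column to the $j$-th column, so it fixes every entry $x_{kl}$ with $l \neq j$; as $q$ depends only on these entries, $q$ is invariant under the map and $Y \mapsto Y \cdot E_{ij}(-q)$ is the inverse. There is no genuine obstacle here — the only point demanding care is the invariance of $p$ (respectively $q$), which is immediate once one notes that the map alters only the $i$-th row (respectively the $j$-th column).
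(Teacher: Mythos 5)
Your proof is correct and is exactly the straightforward argument the paper has in mind (the paper omits it, remarking only that the proof is straightforward): the map changes only the $i$-th row, so $p$ — depending only on entries $x_{kl}$ with $k \neq i$ — is invariant, and $X \mapsto E_{ij}(-p)\cdot X$ is a polynomial inverse. Nothing to add.
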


Recall that the group of tame automorphisms of $\CC^n$ is the subgroup
of the automorphisms of $\CC^n$ 
generated by the affine linear maps and the
elementary automorphisms, i.e. the automorphisms of the form
\tr{\[
	(x_1, \ldots, x_n) \longmapsto (x_1, \ldots, x_i + 
	h_i(x_1, \ldots, \widehat{x_i}, \ldots, x_n), \ldots, x_n) \, , 
\]
where $h_i$ is a polynomial not depending on $x_i$.}
In the next result we list automorphisms of $\CC^n$
that \tr{can be lifted to} automorphisms of $\SL_n$ via 
the projection to the first column $\pi_1 \colon \SL_n \to \CC^n$, 
\tr{i.e.
automorphisms $\psi$ of $\CC^n$ such that there exists an
automorphism $\Psi$ of $\SL_n$ (depending on $\psi$) 
that makes the following diagram commutative:
\[
	\xymatrix{
		\SL_n \ar[d]_-{\pi_1} \ar[r]^-{\Psi} & \SL_n \ar[d]^-{\pi_1} \\
		\CC^n \ar[r]^-{\psi} & \CC^n \ar@{}[r]|<{\ \textrm{\normalsize.}} &
	}
\]}

\begin{lemma}
	\label{lem.autos2}
	Let $n \geq 2$. Every tame automorphism of $\CC^n$
	that preserves the origin \tr{can be lifted to} some automorphism
	of $\SL_n$ via $\pi_1 \colon \SL_n \to \CC^n$.
%
%
\end{lemma}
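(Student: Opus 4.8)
The plan is to exploit that the automorphisms of $\CC^n$ which are induced via $\pi_1$ form a subgroup $G$ of $\Aut(\CC^n)$: if $\Phi, \Psi \in \Aut(\SL_n)$ induce $\varphi, \psi$, i.e. $\pi_1 \circ \Phi = \varphi \circ \pi_1$ and likewise for $\psi$, then $\Phi \circ \Psi$ induces $\varphi \circ \psi$ and $\Phi^{-1}$ induces $\varphi^{-1}$. Since $\pi_1$ is surjective onto $\CC^n \setminus \{0\}$ and $\pi_1 \circ \Phi$ is then surjective onto $\CC^n \setminus \{0\}$ as well, every induced automorphism maps $\CC^n \setminus \{0\}$ onto itself and hence fixes the origin; this explains the hypothesis. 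It therefore suffices to produce, for a generating set of the group of origin-preserving tame automorphisms, automorphisms of $\SL_n$ inducing them. First I would check that this group is generated by $\GL_n$ together with the origin-preserving elementary automorphisms $\tau_{i,h} \colon (x_1, \ldots, x_n) \mapsto (x_1, \ldots, x_i + h, \ldots, x_n)$ with $h \in \CC[x_1, \ldots, \widehat{x_i}, \ldots, x_n]$ and $h(0) = 0$. Indeed, writing a tame automorphism as a word in linear maps, elementary automorphisms and translations, one can push every translation to the left past the remaining factors (using $L \circ T_v = T_{Lv} \circ L$ and the analogous identity for elementary maps, which changes $h$ only by a constant); the surviving translation must be trivial precisely because the total map fixes the origin.

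For the linear part, a matrix $B \in \SL_n$ is induced by the left translation $X \mapsto BX$, since the first column of $BX$ is $B$ times the first column of $X$. A diagonal matrix $D = \mathrm{diag}(d, 1, \ldots, 1)$ with $d \neq 0$ is induced by $X \mapsto D X D'$ with $D' = \mathrm{diag}(1, d^{-1}, 1, \ldots, 1)$: this preserves $\SL_n$ because $\det(DD') = 1$, and as $D'$ is diagonal with $(D')_{11} = 1$ the first column of $DXD'$ equals $D$ times the first column of $X$. Since every $A \in \GL_n$ factors as $A = B D$ with $D = \mathrm{diag}(\det A, 1, \ldots, 1)$ and $B = A D^{-1} \in \SL_n$, the group $G$ contains all of $\GL_n$.

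The heart of the argument is to induce $\tau_{i,h}$. Because $h(0) = 0$, I can write $h = \sum_{j \neq i} x_j\, g_j$ with $g_j \in \CC[x_1, \ldots, \widehat{x_i}, \ldots, x_n]$. For each $j \neq i$ put $p_j := g_j(x_{11}, \ldots, \widehat{x_{i1}}, \ldots, x_{n1})$, a polynomial in the entries $x_{kl}$ with $k \neq i$, so that by Lemma~\ref{lem.autos1} the map $\Phi_j \colon X \mapsto E_{ij}(p_j)\, X$ is an automorphism of $\SL_n$. Left multiplication by $E_{ij}(p_j) = \id + p_j\, e_{ij}$ leaves every row other than the $i$-th unchanged and replaces the $i$-th row by itself plus $p_j$ times the $j$-th row. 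Crucially, the $p_j$ depend only on entries in rows different from $i$, which are never altered by any $\Phi_{j'}$; hence in the composition $\Phi := \Phi_{j_1} \circ \cdots \circ \Phi_{j_r}$ over all $j \neq i$ each $p_j$ retains its original value, and the cumulative effect on the first column is to replace its $i$-th entry $x_i$ by $x_i + \sum_{j \neq i} g_j\, x_j = x_i + h$, leaving the other entries fixed. Thus $\pi_1 \circ \Phi = \tau_{i,h} \circ \pi_1$, so $\tau_{i,h} \in G$, and together with the linear case this finishes the proof.

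The main obstacle is this elementary step: one must arrange the lifts so that they compose correctly, and the decomposition $h = \sum_{j \neq i} x_j g_j$ combined with the observation that left multiplication by $E_{ij}(\cdot)$ touches only row $i$ is exactly what keeps the coefficients from being disturbed along the composition. A secondary point needing care is the reduction to origin-preserving generators, where I must verify that the translations can indeed be absorbed.
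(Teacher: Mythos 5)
Your proof is correct and takes essentially the same approach as the paper: reduce to the generators of the origin-preserving tame group ($\GL_n$ together with the origin-preserving elementary automorphisms), lift the linear part by an explicit automorphism of $\SL_n$ that corrects the determinant in a spare diagonal slot, and lift the elementary automorphisms via Lemma~\ref{lem.autos1}. The only difference is expository: you spell out the decomposition $h = \sum_{j \neq i} x_j g_j$ and the composition of the lifts $X \mapsto E_{ij}(p_j)X$ (noting that the coefficients $p_j$, depending only on rows $k \neq i$, are undisturbed), details the paper compresses into the phrase ``Lemma~\ref{lem.autos1} implies.''
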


\begin{proof}
	First, remark that the group of tame automorphisms of $\CC^n$
	that preserve the origin is generated by the linear group $\GL_n$
	and by the elementary automorphisms that preserve the origin.
	For every $A \in \GL_n$, \tr{the linear map $x \mapsto A \cdot x$ of $\CC^n$
	can be lifted to the automorphism
	\[
		\SL_n \longrightarrow \SL_n \, , \quad
		X \longmapsto A \cdot X \cdot 
		\text{diag}\left(1, \ldots, 1, (\det A)^{-1} \right)
	\]
	via $\pi_1$}, where 
	$\text{diag}(\lambda_1, \ldots, \lambda_n)$
	denotes the $n \times n$-diagonal matrix with entries 
	$\lambda_1, \ldots, \lambda_n$.
	\tr{Let $\psi$ be an elementary automorphism of $\CC^n$
	that preserves the origin, i.e. there exist $i \in \{ 1, \ldots n \}$
	and polynomials $p_1, \ldots, \widehat{p_i}, \ldots, p_n$
	in the variables $x_1, \ldots, \widehat{x_i}, \ldots, x_n$ such that
	\[
		\psi(x_1, \ldots, x_n) = (x_1, \ldots, x_i + 
		\sum_{j \neq i} x_j p_j(x_1, \ldots, \widehat{x_i}, \ldots, x_n), \ldots, x_n)
		\, . 
	\]
	The automorphism $\psi$ can be lifted to some automorphism
	of $\SL_n$, e.g. to the automorphism
	\[
		\SL_n \longrightarrow \SL_n \, , \quad 
		X \longmapsto \left( \prod_{j \neq i} E_{ij}(p_j(X_{11}, \ldots,
		\widehat{X_{i1}}, \ldots, X_{n1})) \right)
		\cdot X \, ,
	\]
	cf. also Lemma~\ref{lem.autos1}.
	This finishes the proof.}
\end{proof}

\section{A generic projection result}

Let $\tr{V}$ be an algebraic variety. We say that a statement is true for 
\emph{generic} $\tr{v \in V}$ if there exists 
a Zariski dense open subset $\tr{U \subseteq V}$ 
such that the statement is true for all $\tr{v \in U}$.

\begin{lemma}
	\label{lem.GenericProjection}
	Let $n \geq 3$. If 
	$f \colon \CC \to \SL_n$ is an algebraic embedding such
	that the matrices $f(0)-f(1)$ and $f'(0)$ have maximal rank, 
	then, for generic $A \in \tr{\M_{n,n-1}}$ 
	the map
	\[
		\CC \stackrel{f}{\longrightarrow} \SL_n 
		\stackrel{\pi_A}{\longrightarrow} \tr{\M_{n,n-1}}
	\]
	is an algebraic embedding, where
	$\tr{\M_{n,n-1}}$ denotes the space of $n \times (n-1)$-matrices and
	$\pi_A$ is given by $X \mapsto X \cdot A$.
\end{lemma}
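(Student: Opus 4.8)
The plan is to reduce the statement to two genericity conditions and then verify each by a dimension count on an incidence variety. First I would recall the standard fact that a polynomial morphism $\psi \colon \CC \to \M_{n(n-1)}$ is an algebraic embedding (a closed immersion) as soon as it is injective and immersive. Indeed, an immersive $\psi$ is non-constant, so one of its entries is a non-constant polynomial and $\CC[t]$ is finite over the subalgebra generated by the entries of $\psi$; hence $\psi$ is finite onto its image and that image is closed. Injectivity then makes this finite map birational and bijective, and immersivity makes it unramified, so it is an isomorphism onto its closed image. Applying this to $\psi = \pi_A \circ f$, whose differential at $t$ is $f'(t) A$, it suffices to produce a dense open set of $A \in \M_{n(n-1)}$ for which (i) $(f(s) - f(t)) A \neq 0$ whenever $s \neq t$, and (ii) $f'(t) A \neq 0$ for every $t \in \CC$.

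For (i) I would form the incidence set $B_1 = \{(s,t,A) : s \neq t,\ (f(s)-f(t))A = 0\}$ inside $(\CC^2 \setminus \Delta) \times \M_{n(n-1)}$, where $\Delta$ is the diagonal. The $A$ violating (i) are exactly the image $\pr(B_1)$ under projection to the $\M_{n(n-1)}$-factor; since this image is constructible of dimension at most $\dim B_1$, it suffices to show $\dim B_1 < n(n-1) = \dim \M_{n(n-1)}$. Over a fixed pair $(s,t)$ the fibre $\{A : (f(s)-f(t))A = 0\}$ is a linear space of dimension $(n-1)\dim\ker(f(s)-f(t)) = (n-1)(n - \rank(f(s)-f(t)))$. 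This is where the hypothesis enters: because $f(0)-f(1)$ has maximal rank, the polynomial $\det(f(s)-f(t))$ is not identically zero, so the locus $\{\rank(f(s)-f(t)) < n\}$ lies in a curve of dimension at most $1$, while injectivity of $f$ forces the rank to be at least $1$ there. Stratifying the base by rank, the full-rank part of $B_1$ has dimension at most $2$ and the rank-deficient part at most $1 + (n-1)^2$. Since $1 + (n-1)^2 = n(n-1) - (n-2) < n(n-1)$ precisely when $n \geq 3$, I conclude $\dim B_1 < n(n-1)$, so generic $A$ satisfies (i).

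Condition (ii) is handled the same way with $B_2 = \{(t,A) : f'(t)A = 0\} \subseteq \CC \times \M_{n(n-1)}$. The hypothesis that $f'(0)$ has maximal rank gives $\det f'(t) \not\equiv 0$, so $\{\rank f'(t) < n\}$ is finite, and on it $\rank f'(t) \geq 1$ since $f$ is an immersion. Hence the rank-deficient part of $B_2$ has dimension at most $(n-1)^2$ and the full-rank part at most $1$, whence $\dim B_2 \leq (n-1)^2 < n(n-1)$, so generic $A$ satisfies (ii) as well. Intersecting the two dense open sets yields the desired generic $A$.

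I expect the delicate point to be the dimension bookkeeping over the rank-degeneracy loci, and this is exactly what forces both the hypothesis $n \geq 3$ and the maximal-rank assumptions. The crude estimate using the whole base $\CC^2$ for $B_1$ would only give $2 + (n-1)^2 = n(n-1)$, which is not strict; what rescues the count is that the maximal-rank hypotheses make $\{\det(f(s)-f(t)) = 0\}$ and $\{\det f'(t) = 0\}$ proper, hence of dimension one below the expected value. The reduction to (i) and (ii) and the fibrewise linear-algebra computations are routine, so the genuine work lies in justifying these dimension estimates and the stratification by rank.
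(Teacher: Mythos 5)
Your proof is correct and follows essentially the same route as the paper: both arguments stratify $\CC^2 \setminus \Delta$ (resp.\ $\CC$) by the rank of $f(s)-f(t)$ (resp.\ $f'(t)$), use the maximal-rank hypotheses at $(0,1)$ and at $0$ to confine the rank-deficient locus to dimension at most $1$ (resp.\ to a finite set) with rank at least $1$ there by injectivity (resp.\ immersivity), and conclude by exactly the dimension count where $n \geq 3$ enters; your incidence-variety formulation is just the paper's pointwise genericity argument in equivalent packaging. The only substantive addition is that you verify properness and the embedding criterion explicitly (finiteness of nonconstant polynomial maps from $\CC$, plus injective $+$ unramified $+$ proper $\Rightarrow$ closed immersion), where the paper treats properness as clear.
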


\begin{proof}
	Let $\Delta \subseteq \CC^2$ be the diagonal. Consider the following
	(Zariski) locally closed subsets of $\CC^2 \setminus \Delta$:
	\[
		C_i = \{ \, (t, r) \in \CC^2 \setminus \Delta \ | \ \rank(f(t)-f(r)) = i \, \} \, .
	\]
	Consider for every $A \in \tr{\M_{n,n-1}}$ the composition
	\begin{equation}
		\label{eq.Composition}
		\tag{\tr{$\ast$}}
		\xymatrix{
		C_i \ar[r] & \CC^{2} \setminus \Delta
		\ar[rrr]^-{\tr{(t, r)} \mapsto f(t)-f(r)} &&& 
		\tr{\M_{n,n}} \ar[r]^-{\pi_A} & \tr{\M_{n,n-1}} \, .
		}
	\end{equation}
	This map is never zero for generic $A \in \tr{\M_{n,n-1}}$; indeed:
	\begin{itemize}
		\item If $1 < i \leq n$, then \eqref{eq.Composition} is never zero
		provided that $A \in \tr{\M_{n,n-1}}$ has maximal rank.
				
		\item If $i = 1$, then $\dim C_1 \leq 1$, since
		$\dim C_n = 2$ (note that $f(0)-f(1)$ has maximal rank). 
		For $(t, r) \in C_1$, let
		$Z_{(t, r)} = \ker( f(t)-f(r))$.
		Since $\dim C_1 \leq 1 < n-1$, a generic $(n-1)$-dimensional subspace
		\tr{of $\CC^n$}
		is different from $Z_{(t, r)}$ for all $(t, r) \in C_1$. 
		Thus, for generic $A$ the composition
		$\eqref{eq.Composition}$ is never zero.
	\end{itemize}
	Clearly, $C_0 = \varnothing$.
	Hence, we proved that the composition $\pi_A \circ f$ is injective for generic
	$A \in \tr{\M_{n,n-1}}$. 
	Clearly, $\pi_A \circ f$
	is proper for generic $A \in \tr{\M_{n,n-1}}$.
	
	For the immersivity, we have to show for
	generic $A \in \tr{\M_{n,n-1}}$ that
	\begin{equation}
		\label{eq2}
		f'(t) \cdot A \neq 0
	\end{equation}
	for all $t \in \CC$.
	Since $\rank f'(0) = n$, \tr{the set $U = \{ \, t \in \CC \ | \ \rank f'(t) = n \, \}$ 
	is Zariski dense and open in $\CC$.}
	Thus \eqref{eq2} is satisfied for all $A \neq 0$ and for all $t \in U$.
	Since $f$ is immersive, we have $f'(t) \neq 0$ for all $t \in \CC$.
	This implies that for generic $A$ we have $f'(t) \cdot A \neq 0$ 
	for all $t \in \CC$.
\end{proof}

\section{\texorpdfstring{Algebraic embeddings of $\CC$ into $\SL_n$ for $n \geq 3$}
{Algebraic embeddings of C into SLn for n >= 3}}

\begin{theorem}	
	\label{thm.SLn}
	For $n \geq 3$, any two 
	algebraic embeddings of $\CC$ into $\SL_n$ are the same
	up to an algebraic automorphism of $\SL_n$.
\end{theorem}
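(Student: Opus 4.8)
The plan is to reduce two arbitrary algebraic embeddings $f, g \colon \CC \to \SL_n$ to a common normal form by composing each with suitable algebraic automorphisms of $\SL_n$. The strategy rests on the generic projection result, Lemma~\ref{lem.GenericProjection}, together with the fact (Lemma~\ref{lem.autos2}) that every origin-preserving tame automorphism of $\CC^n$ is induced via the first-column projection $\pi_1$. The key idea is that an embedding $f$ is determined by its columns, and since $\SL_n$ has so many automorphisms (multiplication by elementary matrices on the left and right, Lemma~\ref{lem.autos1}), we have enough freedom to straighten out the columns one at a time.

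Here is how I would proceed. First I would normalize so that $f(0) = \id$; since left multiplication by $f(0)^{-1}$ is an automorphism of $\SL_n$, this is harmless, and similarly for $g$. Next I would apply Lemma~\ref{lem.GenericProjection} to produce an algebraic automorphism of $\SL_n$ after which the first $n-1$ columns of $f$ give a \emph{proper} embedding of $\CC$ into $\M_{n(n-1)} \cong \CC^{n(n-1)}$; more precisely, right multiplication by a generic invertible matrix arranges that the composite $\CC \to \SL_n \to \M_{n(n-1)}$ (forgetting the last column) is an algebraic embedding into a high-dimensional affine space. Since $n \geq 3$ gives $n(n-1) \geq 6 > 2 \cdot 1 + 1 = 3$, the dimension hypothesis of the Nori--Srinivas--Kaliman theorem is satisfied for the curve image in that affine space, so there is an \emph{algebraic} automorphism of $\CC^{n(n-1)}$ straightening this embedding to the standard linear embedding $t \mapsto (t, 0, \ldots, 0)$. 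The crux is then to lift this affine-space automorphism, or a tame version of it, back to an automorphism of $\SL_n$ that acts by suitable left and right elementary multiplications, so that after applying it the first $n-1$ columns of $f$ take a fixed standard form.

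Once the first $n-1$ columns have been standardized for both $f$ and $g$, the last column is essentially determined: a matrix in $\SL_n$ with prescribed first $n-1$ columns (of maximal rank) has its last column pinned down up to the one-parameter freedom coming from the cofactor expansion of the determinant condition $\det = 1$, i.e.\ up to adding multiples of the span of the first $n-1$ columns. I would absorb this remaining discrepancy between $f$ and $g$ using one more left or right elementary automorphism from Lemma~\ref{lem.autos1}, whose polynomial entry records the difference of the two last columns expressed in the earlier coordinates. After this step $f$ and $g$ agree, which gives the claimed algebraic automorphism of $\SL_n$ carrying one to the other.

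The main obstacle I anticipate is the lifting step: the Nori--Srinivas--Kaliman straightening lives in $\CC^{n(n-1)}$, but I must realize it (or achieve its effect on the relevant columns) through automorphisms of $\SL_n$, which do not include arbitrary polynomial automorphisms of the ambient matrix space but only the left/right multiplications and the constrained tame maps coming through $\pi_1$. Matching the generic-projection freedom of Lemma~\ref{lem.GenericProjection} with the inducibility of tame automorphisms in Lemma~\ref{lem.autos2} is the delicate point, and ensuring that the straightening can be taken tame (or at least inducible) is where the argument must be carried out carefully; I expect this to be the technical heart of the proof, with the injectivity/properness/immersivity bookkeeping from Lemma~\ref{lem.GenericProjection} doing the heavy lifting to guarantee we are in the regime where the embedding theorem applies.
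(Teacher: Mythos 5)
Your proposal has a genuine gap at precisely the point you flag as the ``technical heart'': the lifting step does not just require care, it is unavailable, and the paper's proof is structured specifically to avoid it. There is no mechanism to realize an automorphism of $\M_{n(n-1)}$ through an automorphism of $\SL_n$: Lemma~\ref{lem.autos2} only concerns the first-column projection $\pi_1 \colon \SL_n \to \CC^n$, and even there only \emph{tame} origin-preserving automorphisms, whereas the Nori--Srinivas--Kaliman straightening automorphism of $\CC^{n(n-1)}$ is not known to be tame (tameness of all automorphisms of $\CC^N$ is open for $N \geq 3$). No analogue of Lemma~\ref{lem.autos2} for $\pi_{n-1}$ is proved or plausible from the listed automorphisms. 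A second, smaller gap: your normalization $f(0) = \id$ does nothing toward the hypotheses of Lemma~\ref{lem.GenericProjection}, namely $\det(f(0)-f(1)) \neq 0$ and $\det f'(0) \neq 0$, and left/right translations cannot achieve them, since $\det\bigl(A f(0) B - A f(1) B\bigr) = \det\bigl(f(0)-f(1)\bigr)$ and $\det\bigl((AfB)'(0)\bigr) = \det f'(0)$ for $A, B \in \SL_n$. The paper needs the flexibility machinery of Arzhantsev--Flenner--Kaliman et al.\ (automorphisms with prescribed jets at finite sets, $2$-transitivity of $\Aut(\SL_n)$) to arrange these rank conditions, an ingredient absent from your argument.

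What the paper does instead of straightening in $\CC^{n(n-1)}$ is to use the embedding property of $\pi_{n-1} \circ f$ purely algebraically: an embedding induces a surjection of coordinate rings, so every polynomial in $t$ is a polynomial in the entries $f_{ij}(t)$, $j \geq 2$. After a linear automorphism $X \mapsto BXP$ (here $n \geq 3$ is used to find $v$ with $v^T \cdot \pi_{n-1} \circ f(t)$ nowhere zero) one arranges that the last-row vector $(f_{n2}(t), \ldots, f_{nn}(t))$ has no common zero, hence generates the unit ideal of $\CC[t]$, giving polynomials $p_k$ in the variables $x_{ij}$, $j \geq 2$, with $\sum_k f_{nk}(t)\, p_k(\ldots, f_{ij}(t), \ldots) = t - f_{n1}(t)$. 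A single right multiplication by a unipotent matrix in these variables (an automorphism by Lemma~\ref{lem.autos1}) then makes the $(n,1)$-entry equal to $t$; clearing the rest of the first column yields $\pi_1 \circ f(t) = (1, 0, \ldots, 0, t)^T$, and the explicit automorphism $X \mapsto X \cdot f(x_{n1})^{-1} \cdot E_{n1}(x_{n1})$ of Lemma~\ref{lem:two} (well defined because right multiplication by a matrix with first column $e_1$ preserves $x_{n1}$) carries $f$ to the standard embedding $t \mapsto E_{n1}(t)$. So no straightening theorem in the ambient affine space, and no lifting of its automorphisms, is ever invoked; your final ``absorb the last column'' step is in the same spirit as Lemma~\ref{lem:two} and would be fine, but the route you propose to standardize the first $n-1$ columns cannot be completed as stated.
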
 

\begin{lemma}
	\label{lem:one}
	Let $n \geq 2$.
	Assume that $f \colon \CC \to \SL_n$ is an algebraic embedding such that
	\[
		\CC \stackrel{f}{\longrightarrow} \SL_n 
		\stackrel{\pi_{n-1}}{\longrightarrow}
		\tr{\M_{n,n-1}}
	\]
	is an algebraic embedding, where $\pi_{n-1}$ denotes
	the projection to the first $n-1$ columns.
	Then there exists an algebraic automorphism $\varphi$ of $\SL_n$ such that
	\[
		\CC \stackrel{f}{\longrightarrow} \SL_n \stackrel{\varphi}{\longrightarrow}
		\SL_n \stackrel{\pi_1}{\longrightarrow} \CC^n
	\]
	is  given by $t \mapsto (1, 0, \ldots, 0, t)^T$.
\end{lemma}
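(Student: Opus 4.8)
The plan is to realise a Gaussian reduction of $f$, carried out over the polynomial ring $\CC[t]$, as a composition of the elementary automorphisms supplied by Lemma~\ref{lem.autos1}. Write $f(t)=(v_1(t),\ldots,v_n(t))$ in terms of its columns. The hypothesis that $\pi_{n-1}\circ f$ is a closed embedding means precisely that the induced ring map $\CC[x_{kl}:l\le n-1]\to\CC[t]$ is surjective; hence there is a polynomial $P$ in the entries of the first $n-1$ columns with $P(f(t))=t$ for all $t$. This recovery polynomial is the only place where the extra hypothesis enters, and it is what lets us feed the variable $t$ into an elementary factor as a polynomial in the matrix entries.

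Next I would fix the target. The matrix $E_{n1}(t)$ lies in $\SL_n(\CC[t])$ and has first column $(1,0,\ldots,0,t)^T$, so it suffices to produce a single, $t$-independent automorphism $\varphi$ of the variety $\SL_n$ with $\pi_1\circ\varphi\circ f(t)=(1,0,\ldots,0,t)^T$, which I would obtain by arranging $\varphi\circ f(t)=E_{n1}(t)$. Since $\CC[t]$ is Euclidean, the group $\SL_n(\CC[t])$ is generated by elementary matrices, so both $f(t)$ and $E_{n1}(t)$ reduce to the identity by elementary row and column operations with coefficients in $\CC[t]$; consequently there are products $L(t),R(t)$ of such elementaries with $L(t)\,f(t)\,R(t)=E_{n1}(t)$ (one may even take $R=I$, since $E_{n1}(t)f(t)^{-1}$ is already a product of elementaries). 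The real task is to realise the individual factors of $L$ and $R$ as automorphisms of $\SL_n$.

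The realisation works factor by factor. An elementary applied on the right touches only one column $j$; by Lemma~\ref{lem.autos1} it becomes a genuine automorphism once its scalar is rewritten as a polynomial in the entries outside column $j$. Dually, a left factor touching row $i$ is legitimate once its scalar is a polynomial in the entries outside row $i$. In particular, all operations on the last column are free: the forbidden column is then the $n$-th, and $P$ already expresses $t$ through the first $n-1$ columns, so any coefficient in $\CC[t]$ is admissible there. I would exploit this first to normalise the last column, using coefficients built directly from the current entries wherever a plain entrywise reduction suffices (the first column is unimodular, since cofactor expansion of $\det f=1$ along it writes $1$ as a $\CC[t]$-combination of its entries), and invoking $P$, or a recovery polynomial adapted to the current stage, only when a coefficient must carry a genuine $t$-dependence. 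Composing all these factor automorphisms in order yields the single automorphism $\varphi$.

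The main obstacle, and the only delicate point, is exactly this recovery constraint for the factors that need $t$ as a coefficient while acting on a row or column from which $t$ is not a priori polynomially recoverable: deleting a row or a column may destroy the closed-embedding property and hence the existence of a suitable recovery polynomial. I would handle this by reordering the reduction so that, just before any step requiring $t$ in row $n$ — for instance the final left factor $E_{n1}(t)$ that installs $t$ in the $(n,1)$-slot once the first column equals $e_1$ — the variable $t$ has been parked as an actual entry in one of the rows $1,\ldots,n-1$; then $t$ is literally a coordinate function outside row $n$ and the step is a valid automorphism. Showing that such a parking can always be arranged, and that the intermediate matrices retain enough embedding structure to supply every recovery polynomial used along the way, is where the genuine work lies. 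Once $\varphi$ is produced, injectivity, immersivity and properness of $\pi_1\circ\varphi\circ f$ are automatic, since $\varphi$ is an isomorphism of $\SL_n$ and $t\mapsto(1,0,\ldots,0,t)^T$ is manifestly a closed embedding of $\CC$.
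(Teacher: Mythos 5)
Your proposal correctly identifies the two levers --- Lemma~\ref{lem.autos1} turns an elementary factor into an automorphism of $\SL_n$ once its coefficient is rewritten as a polynomial in the entries off the acted-upon row or column, and the embedding hypothesis supplies a recovery polynomial $P$ with $P(f(t)) = t$ --- but it stops exactly at the crux. The recovery polynomial is guaranteed only at the initial stage: after the first elementary factor is applied, the curve $t \mapsto M(t)$ changes, and nothing ensures that the projection of the new curve away from the forbidden row or column is still an embedding (injectivity and properness can both fail), so ``a recovery polynomial adapted to the current stage'' need not exist. Likewise the ``parking'' of $t$ as an actual entry in rows $1, \ldots, n-1$ is asserted to be arrangeable by reordering the factorization, but no invariant that survives the reduction is exhibited, and you say yourself that this ``is where the genuine work lies'' --- that is, the proof of the lemma is deferred rather than given. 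A further warning sign is that your scheme makes no use of $n$, whereas the statement is genuinely harder for $n = 2$: there the paper must invoke the Abhyankar--Moh--Suzuki theorem together with Jung's theorem (through Lemma~\ref{lem.autos2}) to move the first column to $(1, t)^T$, strong evidence that a purely elementary factorization argument cannot be pushed through uniformly in $n$.

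For comparison, the paper's $n \geq 3$ argument performs your ``parking'' in a single stroke, and this is precisely where $n \geq 3$ enters: since the left kernel of $A(t) = \pi_{n-1} \circ f(t)$ is one-dimensional for each $t$, a dimension count produces a vector $v \in \CC^n$ with $v^T \cdot A(t) \neq 0$ for all $t$; after an automorphism $X \mapsto BXP$ one may assume that columns $2, \ldots, n$ of $f(t)$ give an embedding and that $(f_{n2}(t), \ldots, f_{nn}(t))$ never vanishes. Unimodularity then yields $\tilde{p}_k$ with $\sum_{k \geq 2} f_{nk}(t) \tilde{p}_k(t) = t - f_{n1}(t)$, the embedding property converts the $\tilde{p}_k$ into polynomials $p_k$ in the entries of columns $2, \ldots, n$, and a single right multiplication by the unipotent matrix with first column $(1, p_2, \ldots, p_n)^T$ installs $t$ in the $(n,1)$ slot. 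From then on every needed coefficient is a polynomial in the single coordinate $x_{n1}$, which lies outside all rows $i < n$ and all columns $j > 1$, so the cleanup to $(1, 0, \ldots, 0, t)^T$ is immediate. Your sketch contains no substitute for the choice of $v$ --- which genuinely fails for $n = 2$, where the left-kernel direction of the first column can sweep out all of $\P^1$ --- so the gap is not cosmetic: without it, neither the single-stroke parking nor your stage-by-stage reduction is justified.
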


\begin{proof}[Proof of Lemma~\ref{lem:one}]
	Assume that $n = 2$.
	Since two algebraic embeddings of $\CC$ into $\CC^2$
	are the same up to an algebraic automorphism of $\CC^2$ 
	(Abhyankar-Moh-Suzuki Theorem, see 
	\cite{AbMo1975Embeddings-of-the-, Su1974Proprietes-topolog}), 
	one can see that there exists 
	an algebraic automorphism of $\CC^2$ that preserves the origin and
	changes the embedding $\pi_1 \circ f \colon \CC \to \CC^2$
	to the embedding $\CC \to \CC^2$, $t \mapsto (1, t)$. Using the fact that
	every algebraic automorphism of $\CC^2$ is tame 
	(Jung'\tr{s} Theorem, see \cite{Ju1942Uber-ganze-biratio}),
	it follows from Lemma~\ref{lem.autos2} that there exists an algebraic
	automorphism 
	$\varphi$ of $\SL_2$ such that $\pi_1 \circ \varphi \circ f(t) = (1, t)$.
	
	Assume that $n \geq 3$.	
	Let $A(t) = \pi_{n-1} \circ f(t)$. 
	\tr{Since  the kernel of $A(t)^T$ is one-dimensional for all $t$,
	the following affine variety
	\[
		E = \{ \, (v, t) \in \CC^n \times \CC \ | \ A(t)^{T} \cdot v = 0 \, \}
	\]
	defines the total space of a line bundle over $\CC$ 
	with projection map $(v, t) \mapsto t$.
	Since $n \geq 3 > \dim E$, this implies that
	there exists a vector $v \in \CC^n$ such that $v^T \cdot A(t)$ is non-zero for 
	all $t \in \CC$. Now, complete $v^T$ to a matrix $B \in \SL_n$
	with last row equal to $v^T$. Since $n \geq 3$,
	there exists a permutation matrix $P \in \SL_n$, with first column
	equal to $(0, \ldots, 0, 1)^T$.
	After applying the automorphism \tr{$X \mapsto B \cdot X \cdot P$}
	of $\SL_n$}, we can assume that
	\begin{enumerate}[i)]
		\item the map $\CC \to \tr{\M_{n,n-1}}$ given by 
			$t \mapsto (f_{ij}(t))_{1 \leq i \leq n, 2 \leq j \leq n}$
			is an algebraic embedding and
		\item the vector $(f_{n2}(t), f_{n3}(t), \ldots, f_{nn}(t))$ is non-zero
			for all $t \in \CC$,
	\end{enumerate}
	where $f_{ij}(t)$ denotes the $ij$-th entry of the matrix $f(t)$.
	By ii), there exist polynomials $\tilde{p}_k \in \CC[t]$, 
	$2 \leq k \leq n$ such that
	\[
		\sum_{k = 2}^{n} f_{nk}(t) \tilde{p}_{k}(t) 
		= t - f_{n1}(t) \, .
	\]
	By i), there exist polynomials $p_k$ in the \tr{functions}
	$x_{ij}$ with $1 \leq i \leq n$, $2 \leq j \leq n$ such that 
	$\tilde{p}_k(t) = p_k(\ldots, f_{ij}(t), \ldots)$.
	Let $\varphi \colon \SL_n \to \SL_n$ be the automorphism 
	\[
		X \longmapsto X \cdot \begin{pmatrix}
						1 & & & \\
						p_2(X) & 1 & & \\
						\vdots & & \ddots & \\
						p_n(X) & & & 1 
					    \end{pmatrix} \, .
	\]
	Clearly, the left down corner of the matrix $\varphi \circ f(t)$
	\tr{is equal to} $t$.
	Now, one can \tr{construct with the aid of Lemma~\ref{lem.autos2}} 
	an automorphism $\psi$ of $\SL_n$ 
	such that the first column of $\psi \circ \varphi \circ f(t)$
	is equal to $(1, 0, \ldots, 0, t)^T$. This proves the lemma.
\end{proof}

\begin{lemma}
	\label{lem:two}
	Let $n \geq 2$ 
	and let $f \colon \CC \to \SL_n$ be an algebraic embedding
	such that \tr{the first column of $f(t)$ is equal to} $(1, 0, \ldots, 0, t)^T$. 
	Then $f$ is the same as
	\[
		\CC \longrightarrow \SL_n \, , 
		\quad t \longmapsto E_{n1}(t)
	\]
	up to an algebraic automorphism of $\SL_n$,
	where $E_{n1}(t)$ denotes \tr{the} elementary matrix with left down corner
	equal to $t$.
\end{lemma}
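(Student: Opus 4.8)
The plan is to transform $f$ into $E_{n1}(\cdot)$ using only the right-multiplication automorphisms of Lemma~\ref{lem.autos1}, i.e.\ column operations. The enabling observation is that, since $\pi_1 \circ f(t) = (1, 0, \ldots, 0, t)^T$, the coordinate function $x_{n1}$ restricts to $t$ on $f(\CC)$; hence for any polynomial $\lambda \in \CC[t]$ the multiplier $\lambda(x_{n1})$ is a polynomial in the single variable $x_{n1}$, whose column index is $1$. By Lemma~\ref{lem.autos1}, right multiplication by $E_{ij}(\lambda(x_{n1}))$ is therefore a legitimate automorphism for every target column $j \geq 2$, and on the curve it performs the operation ``column $j \mapsto$ column $j + \lambda(t)\cdot$ column $i$''. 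In other words, as long as I only ever modify columns $2, \ldots, n$, I may add to them an arbitrary polynomial-in-$t$ multiple of any other column, while column $1$ stays equal to $(1, 0, \ldots, 0, t)^T$ throughout.

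First I would clear the first row. Writing $a_{1j}(t) \in \CC[t]$ for the $(1,j)$-entry of $f$, I apply for each $j = 2, \ldots, n$ the automorphism $X \mapsto X \cdot E_{1j}(-a_{1j}(x_{n1}))$; since the $(1,1)$-entry is $1$ on $f(\CC)$, this replaces the $(1,j)$-entry by $a_{1j}(t) - a_{1j}(t) = 0$ without touching column $1$. The resulting embedding $f_1$ has first row $(1, 0, \ldots, 0)$ and first column $(1, 0, \ldots, 0, t)^T$, so it factors as
\[
  f_1(t) = E_{n1}(t) \cdot \begin{pmatrix} 1 & 0 \\ 0 & M(t) \end{pmatrix},
\]
where $M(t)$ is the lower-right $(n-1)\times(n-1)$ block (rows and columns $2, \ldots, n$). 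Because column operations preserve determinants, $\det M(t) = \det f_1(t) = 1$, so $M(t) \in \SL_{n-1}(\CC[t])$.

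The remaining task is to reduce the block $\mathrm{diag}(1, M)$ to the identity by column operations on columns $2, \ldots, n$, after which the untouched left factor $E_{n1}(t)$ is exactly the target. For this I would invoke that $\CC[t]$ is Euclidean, so $\SL_{n-1}(\CC[t]) = E_{n-1}(\CC[t])$ and $M(t)^{-1}$ is a product of elementary matrices $E_{i_k j_k}(\lambda_k(t))$ with $\lambda_k \in \CC[t]$. Realizing each factor (in the indices of $M$, i.e.\ columns $2, \ldots, n$) as the automorphism $X \mapsto X \cdot E_{(i_k+1)(j_k+1)}(\lambda_k(x_{n1}))$ and composing, I right-multiply $f_1$ by a matrix $\hat E$ with $\mathrm{diag}(1,M)\,\hat E = \mathrm{diag}(1, M M^{-1}) = \mathrm{Id}$. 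Each of these leaves column $1$ and the first row fixed, so $x_{n1}$ keeps restricting to $t$ and the successive multipliers evaluate correctly, giving $f_1 \cdot \hat E = E_{n1}(t)$. Composing all automorphisms used yields the desired $\varphi$ with $\varphi \circ f(t) = E_{n1}(t)$.

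The step I expect to be the real crux is the initial observation together with the decision to work exclusively with column operations: the restrictions in Lemma~\ref{lem.autos1} (the multiplier of $E_{ij}$ may not involve the entries of the target column) would normally obstruct a naive Gaussian elimination, and the point is that $x_{n1} \equiv t$ lets me realize \emph{every} polynomial multiplier through the single column-$1$ coordinate, so that no forbidden entry is ever needed. Granting that, the only genuine input is the classical fact $\SL_{n-1}(\CC[t]) = E_{n-1}(\CC[t])$; for $n = 2$ the block $M$ is $1\times 1$ and equals $(1)$, so $f_1$ already equals $E_{21}(t)$ and nothing further is required.
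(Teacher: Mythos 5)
Your proof is correct, but it takes a genuinely different route from the paper's, which is a one-line trick: the paper defines the single automorphism $\psi(X) = X \cdot f(x_{n1})^{-1} \cdot E_{n1}(x_{n1})$ and observes that $\psi \circ f(t) = f(t)f(t)^{-1}E_{n1}(t) = E_{n1}(t)$. The only point to verify there is that $\psi$ is indeed an automorphism of $\SL_n$; this holds because the first columns of $f(s)$ and $E_{n1}(s)$ coincide (both are $(1,0,\ldots,0,s)^T$), so $f(s)^{-1}E_{n1}(s)$ has first column $e_1$, whence $\psi$ fixes the first column of $X$ (in particular the coordinate $x_{n1}$) and is inverted by $X \mapsto X\, E_{n1}(-x_{n1})\, f(x_{n1})$. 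You instead build the required automorphism as an explicit product of the elementary automorphisms of Lemma~\ref{lem.autos1}: clearing the first row, splitting off the factor $E_{n1}(t)$, and reducing the remaining block via the classical fact $\SL_{n-1}(\CC[t]) = E_{n-1}(\CC[t])$, valid since $\CC[t]$ is Euclidean. All your steps check out, and your crux observation --- that every polynomial multiplier can be funneled through $x_{n1}$, which restricts to $t$ on the curve and remains so at every stage because column $1$ is never a target column --- is exactly what makes Lemma~\ref{lem.autos1} applicable throughout. It is worth noting that, since all your operations fix the first column, your composite automorphism has the form $X \mapsto X \cdot N(x_{n1})$ with $N(t) = f(t)^{-1}E_{n1}(t)$, i.e.\ it is literally the paper's $\psi$; in effect you have reproved the lemma while additionally showing that $\psi$ factors into the elementary automorphisms of Lemma~\ref{lem.autos1}, a tameness-type refinement the paper does not need. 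The trade-off: the paper's proof is shorter and requires no input beyond the definition, while yours is more constructive but imports the (standard) elementary generation of $\SL_m$ over a Euclidean domain.
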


\begin{proof}[Proof of Lemma~\ref{lem:two}]
	Let $\psi$ be the automorphism of $\SL_n$ defined by 
	\[
		X \longmapsto X \cdot f(X_{n1})^{-1} 
		\cdot E_{n1}(X_{n1})
	\] 
	\tr{where $X_{ij}$ denotes the $ij$-th entry of the matrix $X$.}
	Now, one can easily check that $\psi \circ f$ is the embedding 
	$t \mapsto E_{n1}(t)$.
\end{proof}

\begin{proof}[Proof of Theorem~\ref{thm.SLn}]
Start with an algebraic embedding $f \colon \CC \to \SL_n$. 
As $\SL_n$ is flexible, for any finite set $Z$ in $\SL_n$ 
there exists an automorphism of $\SL_n$ which fixes $Z$ and has
prescribed \tr{volume preserving differentials} in the points of $Z$, see 
\cite[Theorem 4.14 and Remark 4.16]{ArFlKa2013Flexible-varieties}.
Using the fact that $\Aut(\SL_n)$ acts $2$-transitively on $\SL_n$,
see e.g. \cite[Theorem 0.1]{ArFlKa2013Flexible-varieties},
we can assume that
\[
	\det(f(0) - f(1)) \neq 0 \quad \text{and} \quad \det f'(0) \neq 0 \, .
\]
Since $n \geq 3$, \tr{by Lemma~\ref{lem.GenericProjection} there exists
a matrix $A$ in $\M_{n, n-1}$ of maximal rank, 
such that $t \mapsto f(t) \cdot A$ defines an algebraic embedding
of $\CC$ into $\M_{n, n-1}$. Extend $A$ with an additional column 
$v \in \CC^n$ to a $n \times n$-matrix $(A | v)$ of determinant one. 
After applying the algebraic automorphism $X \to X \cdot (A | v)$ 
of $\SL_n$,} we can assume that the composition
\[
	\CC \stackrel{f}{\longrightarrow} \SL_n 
	\stackrel{\pi_{n-1}}{\longrightarrow} \tr{\M_{n,n-1}}
\] 
is an algebraic embedding.
After an algebraic coordinate change of $\SL_n$, we can assume
that the first column of $f(t)$ is equal to 
$(1, 0, \ldots, 0, t)^T$ by Lemma~\ref{lem:one}. Thus, up
to an algebraic automorphism of $\SL_n$, 
$f$ is the same as $t \mapsto E_{n1}(t)$ by Lemma~\ref{lem:two}.
This finishes the proof.
\end{proof}

\section{\texorpdfstring{Algebraic embeddings of $\CC$ into $\SL_2$}
{Algebraic embeddings of C into SL2}}

\begin{theorem}
	\label{thm.SL2}
	Any two algebraic embeddings $\CC \to \SL_2$ are the same 
	up to a holomorphic automorphism of $\SL_2$.
\end{theorem}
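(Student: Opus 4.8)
The plan is to follow the architecture of the proof of Theorem~\ref{thm.SLn} but to replace the algebraic generic projection by a genuinely holomorphic argument. First I would normalize $f$ and $g$ exactly as before: using the flexibility and $2$-transitivity of $\Aut(\SL_2)$ \cite{ArFlKa2013Flexible-varieties}, I arrange the nondegeneracy $\det(f(0)-f(1)) \neq 0$ and $\det f'(0) \neq 0$. The reason Lemma~\ref{lem.GenericProjection} is unavailable for $n=2$ is a dimension count: projecting the curve $f(\CC)$ from the threefold $\SL_2$ to the plane $\CC^2$ (the first column $\pi_1$) is a projection in the \emph{critical} codimension, so a generic projection is a proper immersion but acquires finitely many nodes that no algebraic automorphism can remove. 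This is precisely the phenomenon that keeps the algebraic classification open, and it forces the use of non-algebraic automorphisms.

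The engine is the density property of $\SL_2$ (established by Kaliman--Kutzschebauch and by T\'oth--Varolin), which through the Andersén--Lempert theorem allows one to approximate any isotopy of biholomorphisms on Runge domains by global holomorphic automorphisms of $\SL_2$. It therefore suffices to connect $f$ to the standard embedding $e(t) = E_{21}(t)$ by an isotopy of \emph{proper} holomorphic embeddings and then globalize. I would construct this isotopy by adapting the technique of Kaliman \cite{Ka2013Analytic-extension} and of Feller and the author \cite{FeSt2014Holomorphically-Eq}: the decisive numerical input there is $\dim(\text{target}) > 2 \dim X$, which for $X = \CC$ and target $\SL_2$ reads $3 > 2$ and holds. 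Concretely, the self-intersection locus of a map $\CC \to \SL_2$ has expected dimension $2 - 3 < 0$ and the non-immersivity locus has expected dimension $1 - 3 < 0$, so a generic member of a sufficiently rich holomorphic family of maps is an embedding; the nodes produced by the projection are to be separated by moving within the fibers of the affine-line bundle $\pi_1 \colon \SL_2 \to \CC^2 \setminus \{0\}$, a move made available by the density property.

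With a proper holomorphic isotopy $f_s$ from $f$ to $e$ in hand, Andersén--Lempert produces holomorphic automorphisms of $\SL_2$ carrying $f$ to $e$; applying the same to $g$ then yields the theorem. The main obstacle is exactly the construction of this isotopy \emph{through proper embeddings}. Two points are delicate. First, $\SL_2$ carries no linear structure, so there is no naive interpolation between $f$ and $e$, and the connecting family must be built intrinsically, for instance by bringing both embeddings into a common normal form along a single fiber of $\pi_1$ before interpolating. Second, and more seriously, one must control the behavior at infinity so that every member $f_s$ stays proper: the dimension count guarantees injectivity and immersivity of generic members but not properness of the family, and it is properness, together with the density property, that makes the Andersén--Lempert globalization legitimate. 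Establishing this properness --- the holomorphic ``unknotting'' of an algebraic copy of $\CC$ inside $\SL_2$ in the critical codimension --- is where the real work lies, and is the step I would model most closely on \cite{Ka2013Analytic-extension, FeSt2014Holomorphically-Eq}.
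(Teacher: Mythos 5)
There is a genuine gap: your proof defers its only hard step. The entire content of the theorem is compressed into the sentence ``it therefore suffices to connect $f$ to the standard embedding by an isotopy of proper holomorphic embeddings and then globalize,'' and you then acknowledge that constructing this isotopy ``is where the real work lies'' without carrying it out. Moreover, even granting the isotopy, the globalization step is not off-the-shelf: the Anders\'en--Lempert/density-property machinery gives approximation of isotopies of biholomorphisms between \emph{Runge domains}, whereas here the source $\CC$ is a closed unbounded submanifold of $\SL_2$; passing from an isotopy of proper embeddings of a noncompact curve to a single automorphism of the ambient space requires a parametric version with uniform control at infinity, which in $\CC^n$ is exactly the technical heart of \cite{Ka2013Analytic-extension} and \cite{FeSt2014Holomorphically-Eq} and would have to be re-proved for the target $\SL_2$. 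So the proposal is a plausible research program (and the dimension count $3 > 2\cdot 1$ correctly identifies why one can hope for a purely holomorphic statement), but it is not a proof: both the construction of the proper isotopy and the isotopy-implies-equivalence step are missing.

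It is also worth noting that the paper's actual argument avoids isotopies and the density property altogether. It projects to the first column $\pi_1 \colon \SL_2 \to \CC^2$, \emph{accepts} the resulting nodes, and normalizes the situation by explicit determinant-preserving shear automorphisms
\[
	\begin{pmatrix} x & y \\ z & w \end{pmatrix}
	\longmapsto
	\begin{pmatrix} x & e^{-\lambda c(x,w)}\,y \\ e^{\lambda c(x,w)}\,z & w \end{pmatrix},
\]
with the interpolating function $c$ produced by a Baire-category argument (Lemma~\ref{lem.interpolation}); this is Proposition~\ref{prop.help}, which arranges that $\pi_1 \circ f$ is a proper bimeromorphic immersion onto a curve $\Gamma$ with simple normal crossings separated by the first coordinate. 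One then factors $t - y(t)$ through $\Gamma$, extends the resulting function off $\Gamma$ by Cartan's Theorem~B, and applies a further shear to achieve $y(t) = t$, after which an explicit automorphism in the spirit of Lemma~\ref{lem:two} lands on $t \mapsto E_{12}(t)$. Your intuition of ``separating the nodes by moving within the fibers of $\pi_1$'' is realized there, but by concrete shears rather than by Anders\'en--Lempert approximation --- and that substitution is precisely what makes the paper's proof close, where yours remains open.
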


\begin{remark}
	\label{rem.reparametrization}
	Since for all $(a, b) \in \CC^\ast \times \CC$ the embeddings
	\[
		t \longmapsto  \begin{pmatrix}
			1 & t \\
			0 & 1
		\end{pmatrix}
		\qquad \text{and} \qquad
		t \longmapsto  \begin{pmatrix}
			1 & at+b \\
			0 & 1
		\end{pmatrix}
	\]
	are the same up to an algebraic automorphism 
	of $\SL_2$, it is enough to prove Theorem~\ref{thm.SL2} up
	to \tr{an algebraic} reparametrization of the embeddings $\CC \to \SL_2$.
\end{remark}

For the proof of Theorem~\ref{thm.SL2}, we need the following rather
technical result, which enables us to bring an arbitrary algebraic embedding 
$\CC \to \SL_2$ in a ``nice" position.

\begin{prop}
	\label{prop.help}
	Let $f \colon \CC \to \SL_2$ be an algebraic embedding.
	Then there exists a holomorphic automorphism $\varphi$ of $\SL_2$
	and a constant $a \in \CC$ such that
	the embedding 
	\[
		\tr{\CC \longrightarrow \SL_2 \, , \quad 
		t \longmapsto 
		\begin{pmatrix}
			g_{11}(t) & g_{12}(t) \\
			g_{21}(t) & g_{22}(t)
		\end{pmatrix}
		:= (\varphi \circ f)(t+a)}
	\]
	satisfies:
	\begin{enumerate}
		\item for all $t \in g_{11}^{-1}(0)$ we have $g_{12}(t) = t$;
		\item the map $t \mapsto (g_{11}(t), g_{21}(t))$
			is a proper, bimeromorphic immersion such that
			the image $\Gamma$ has only simple normal crossing singularities;
		\item the singularities of $\Gamma$ are distinguished by the first 
		coordinate of $\CC^2$;
		\item the line $\{ 0 \} \times \CC$ intersects $\Gamma$ transversally;
			\tr{in particular, $\Gamma$ is smooth in every point
			of $\Gamma \cap \{ 0 \} \times \CC$};
		\item the map $t \mapsto g_{11}(t)$ is polynomial.
	\end{enumerate}
\end{prop}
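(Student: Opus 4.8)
The plan is to transport all five requirements onto the first column $t\mapsto(g_{11}(t),g_{21}(t))$ and then to shape that parametrized plane curve. Writing the polynomial entries of $f$ as $f_{ij}$, the relation $\det f\equiv 1$ shows that $f_{11}$ and $f_{21}$ have no common zero, so the first column is a base-point-free parametrized curve in $\CC^2$. The same relation is the essential translation device for condition (1): at any zero $t_0$ of $g_{11}$ one has $g_{11}(t_0)g_{22}(t_0)-g_{12}(t_0)g_{21}(t_0)=1$, hence $g_{12}(t_0)g_{21}(t_0)=-1$, so condition (1) is \emph{equivalent} to the purely first-column requirement $g_{21}(t_0)=-1/t_0$ at every zero $t_0$ of $g_{11}$ (in particular $t_0=0$ never occurs). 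Thus all five conditions can be read off from the first column together with the shift $a$, which is what reduces the problem to curve-shaping.

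First I would normalize the first column to obtain conditions (2)--(5). The key observation is that an automorphism of $\SL_2$ acts on the first column by more than a linear map: by Lemma~\ref{lem.autos1} the shear $X\mapsto E_{12}(p)\,X$ with $p=p(x_{21},x_{22})$ sends $(g_{11},g_{21})\mapsto(g_{11}+p(g_{21},g_{22})\,g_{21},\ g_{21})$, which is genuinely nonlinear in $t$ because it involves $g_{22}$, and can therefore remove cusps and separate accidental multiple points that a mere left multiplication $v\mapsto Av$ (also available, as in the proof of Lemma~\ref{lem.autos2}) cannot. Using such algebraic shears, a generic left multiplication, and the reparametrization, I would bring the first column to a proper bimeromorphic immersion whose only singularities are nodes (condition (2)), then by a further generic linear adjustment arrange that the finitely many nodes have pairwise distinct first coordinates (condition (3)) and that the line $\{0\}\times\CC$ meets the curve transversally in simple points (condition (4)); these are classical generic-position arguments in the spirit of the Abhyankar--Moh--Suzuki theorem \cite{AbMo1975Embeddings-of-the-,Su1974Proprietes-topolog} and Jung's theorem \cite{Ju1942Uber-ganze-biratio}. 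Since every shear used here is polynomial, $g_{11}$ remains a polynomial, giving condition (5).

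The crux is condition (1). After the previous step the curve crosses $\{0\}\times\CC$ at finitely many parameters $t_1,\dots,t_m$, and by the determinant identity the reparametrization $t\mapsto t+a$ converts condition (1) into the requirement that $t_j+1/g_{21}(t_j)$ be independent of $j$, with common value $a$. A short computation shows this is invariant under every automorphism that fixes $g_{11}$ (the only such maps are right multiplication by an upper unipotent matrix, which fixes the whole first column, and the left shear $E_{21}(p(x_{11},x_{12}))$, which changes $g_{21}$ only by a multiple of $g_{11}$ and hence not at the crossings). Therefore condition (1) must be built into the curve by a genuine reshaping that moves the zeros of $g_{11}$, and here algebraic automorphisms are provably too rigid. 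I would instead reshape holomorphically: the values the shearing datum must take at the finitely many crossings are prescribed, and since the reshaping may alter $g_{21}$ into an entire (not merely polynomial) function---condition (5) constrains only $g_{11}$---there is enough room, using a holomorphic automorphism rather than an algebraic one, to force $g_{21}(t_j)=-1/t_j$ at every crossing of the final $g_{11}$.

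The main obstacle is to carry out this last reshaping without destroying the earlier normal form, and in particular without spoiling condition (5): a holomorphic shear $E_{12}(p)$ sends $g_{11}$ to $g_{11}+p(g_{21},g_{22})g_{21}$, which for a non-polynomial $p$ is in general no longer polynomial. I expect the delicate point to be the simultaneous fulfilment of (i) the $(m-1)$ interpolation constraints encoding condition (1), (ii) the nodal/transversal normal form of conditions (2)--(4), stable under small perturbations, and (iii) the polynomiality of $g_{11}$; the flexibility of $\Aut_{\mathrm{hol}}(\SL_2)$ supplies (i), while absorbing the non-polynomial part into a compensating algebraic shear---so that the crossings are repositioned along, and only along, a curve whose first entry stays polynomial---should secure (iii). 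Assembling the generic linear maps and the algebraic and holomorphic shears into a single holomorphic automorphism $\varphi$, and recording the shift $a$, then yields $g(t)=(\varphi\circ f)(t+a)$ with all five properties.
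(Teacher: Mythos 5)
Your reformulation of condition (1) as the first-column constraint $g_{21}(t_0)=-1/t_0$ at the zeros of $g_{11}$ is correct and matches the paper, as does your instinct that the automorphism must be holomorphic while (5) constrains only $g_{11}$. But your rigidity analysis contains an error that derails the execution: the automorphisms fixing $g_{11}$ are \emph{not} exhausted by the unipotent shears you list. For any entire function $c$ the map
\[
	\begin{pmatrix} x & y \\ z & w \end{pmatrix}
	\longmapsto
	\begin{pmatrix} x & e^{-c(x,w)}\,y \\ e^{c(x,w)}\,z & w \end{pmatrix}
\]
is a holomorphic automorphism of $\SL_2$ that fixes $x$ and $w$ identically and multiplies $z(t_j)$ by the unit $e^{c(0,w(t_j))}$; after a shift $t \mapsto t+a$ making the values $w(t_j)$ pairwise distinct, such twists change $g_{21}(t_j)$ \emph{independently} at each zero of $g_{11}$. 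So your claimed invariant $t_j + 1/g_{21}(t_j)$ is not invariant, and there is no obstruction forcing one to move the zeros of $g_{11}$ at all: the paper secures (1) by exactly such a twist, choosing $b$ with $e^{-b(w(t_i))} = -t_i z(t_i)$. Your alternative plan --- a holomorphic shear $E_{12}(p)$ whose non-polynomial perturbation of $g_{11}$ is ``absorbed into a compensating algebraic shear'' --- cannot work: a polynomial shear cannot cancel the non-polynomial term $p(g_{21},g_{22})\,g_{21}$, so (5) would be destroyed, whereas the diagonal-type twists above preserve (5) for free because they never touch $x$.

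The second genuine gap is in (2)--(4). You delegate the nodal normal form to ``classical generic-position arguments'' with algebraic shears and generic left multiplication, but the projection here is constrained to the first column: left multiplication acts only \emph{linearly} on $(g_{11},g_{21})$ and cannot separate two parameters $t \neq t'$ with $x(t)=x(t')$, $z(t)=z(t')$, nor repair tangential crossings; and no argument is given that your shears achieve this generically. This is where the real work of the paper lies: it uses that $t \mapsto (x(t),z(t),w(t))$ is a proper embedding, so failures of injectivity of $(x,z)$ occur only between points of the same $x$-fiber with distinct $w$-values, and then it constructs, by a Baire-category argument in the Fr\'echet space $\mathcal{H}(\CC^2)$ (Lemma~\ref{lem.interpolation}), an entire $c(x,w)$ meeting infinitely many simultaneous interpolation and transversality conditions (its conditions i)--v), including inequalities involving branches of logarithms over all fibers at once, handled via a connectedness argument and a final scalar $\lambda$ for the countably many exceptional fibers), so that the twist $z \mapsto e^{\lambda c(x,w)}z$ simultaneously produces simple normal crossings with distinct abscissae while preserving (1), immersivity, (4) and (5). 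This infinite-dimensional interpolation step --- which you yourself flag as ``the delicate point'' and leave open --- is the core of the proof, so the proposal as it stands is an outline with the central argument missing.
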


The proof of this proposition uses the following easy result which is a
direct application of the Baire category theorem:

\begin{lemma}
	\label{lem.interpolation}
	Let $\mathcal{H}(\CC^n)$ be the Fr\'echet space of holomorphic functions on 
	$\CC^n$ with the compact-open topology.
	If $S$ is the countable union of closed proper subspaces of 
	$\mathcal{H}(\CC^n)$, then $\mathcal{H}(\CC^n) \setminus S$
	is dense in \tr{$\mathcal{H}(\CC^n)$}.
\end{lemma}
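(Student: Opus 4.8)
The plan is to deduce this immediately from the Baire category theorem, after checking the two ingredients it requires: that $\mathcal{H}(\CC^n)$ is a Baire space, and that each closed proper subspace is nowhere dense. For the first point I would recall that the compact-open topology on $\mathcal{H}(\CC^n)$ is induced by the countable family of seminorms $p_k(h) = \sup_{\lVert z\rVert \le k} |h(z)|$, making it a metrizable locally convex space; completeness follows because a sequence that is Cauchy for all the $p_k$ converges locally uniformly, and by Weierstrass's theorem the limit is again holomorphic. Thus $\mathcal{H}(\CC^n)$ is a Fr\'echet space, hence completely metrizable, hence a Baire space.

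The second ingredient is the observation that a closed proper linear subspace $V \subsetneq \mathcal{H}(\CC^n)$ is nowhere dense. As $V$ is already closed, I only need that $V$ has empty interior. If $V$ contained a nonempty open set, then after translating it would contain an open neighborhood of the origin; but any neighborhood of the origin in a topological vector space is absorbing, so for each $h$ some scalar multiple of $h$ would lie in $V$, and since $V$ is a subspace this would force $h \in V$ for every $h$, contradicting $V \neq \mathcal{H}(\CC^n)$.

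Writing $S = \bigcup_k V_k$ with each $V_k$ a closed proper subspace, I would then note that each complement $\mathcal{H}(\CC^n) \setminus V_k$ is open (since $V_k$ is closed) and dense (since $V_k$ has empty interior), and conclude by the Baire category theorem that the countable intersection $\bigcap_k \bigl( \mathcal{H}(\CC^n) \setminus V_k \bigr) = \mathcal{H}(\CC^n) \setminus S$ is dense. I do not expect any real obstacle here; the only step needing a word of justification is that a proper subspace cannot contain a neighborhood of the origin, which is the standard absorbing-set argument recorded above.
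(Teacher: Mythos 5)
Your proposal is correct and is exactly the argument the paper intends: the paper gives no written proof, stating only that the lemma is a direct application of the Baire category theorem, and your write-up supplies precisely that standard argument (Fr\'echet implies Baire, a closed proper linear subspace is nowhere dense by the absorbing-neighborhood argument, then intersect the dense open complements). Nothing is missing.
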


Let $p \in \CC^n$ and let $i \in \{ 1, \ldots, n \}$. 
In our proof of Proposition~\ref{prop.help} we use the fact that
the linear functionals on $\mathcal{H}(\CC^n)$
\[
	h \longmapsto h(p) \quad \text{and} \quad h \longmapsto D_{x_i} h(p)
\]
are continuous and thus their kernels are proper closed subspaces 
\tr{of $\mathcal{H}(\CC^n)$}.

\tr{Additionally, we use for the proof of Proposition~\ref{prop.help} the
following, again rather technical result:

\begin{lemma}
	\label{lem.starting}
	Let $f \colon \CC \to \SL_2$ be an algebraic embedding.
	Then there exists an algebraic automorphism $\varphi$ of $\SL_2$
	such that the embedding 
	\[
		\CC \longrightarrow \SL_2 \, , \quad
		t \longmapsto (\varphi \circ f)(t) = 
		 \begin{pmatrix}
			x(t) & y(t) \\
			z(t) & w(t)
		\end{pmatrix}
	\]
	satisfies:
	\begin{enumerate}[a)]
		\item the maps $t \mapsto x(t)$ and
			$t \mapsto w(t)$ are non-constant polynomials;
		\item the maps $t \mapsto (x(t), z(t))$ and $t \mapsto (x(t), w(t))$
			 are bimeromorphic and immersive;
		\item the singularities of the image of $t \mapsto (x(t), z(t))$ lie
		         inside $(\CC^\ast)^2$;
		\item the image of $t \mapsto (x(t), z(t))$ intersects $\{ 0 \} \times \CC$
			transversally.
	\end{enumerate}	
\end{lemma}

\begin{proof}[Proof of Lemma~\ref{lem.starting}]
	Clearly, we can assume that $f(0)$ is the identity matrix $E_2 \in \SL_2$.
	By \cite[Theorem 4.14 and Remark 4.16]{ArFlKa2013Flexible-varieties},
	there exists an algebraic automorphism of $\SL_2$
	which fixes $E_2$ and maps the tangent vector 
	$f'(0) \in T_{E_2} \SL_2 =  \Lie \SL_2$ to the matrix 
	\[
		F_2 = \begin{pmatrix}
			1 & 0 \\
			0 & -1
		\end{pmatrix}
		\in \Lie \SL_2 \, .
	\]
	Thus we can assume that $f(0) = E_2$ and $f'(0) = F_2$.
	In particular, property a) is satisfied. Since $f'(t)$ is never zero
	and since $f'(t)$ is invertible for generic $t$ (note that $f'(0) = F_2$
	is invertible) it follows that $f'(t) \cdot v$ is non-zero for generic 
	$v \in \CC^2 \setminus \{ (0, 0) \}$. For generic $\mu \in \CC$, this implies 
	that the embedding
	\[
		t \longmapsto f(t) \cdot \begin{pmatrix}
					   1 & 0 \\
					   \mu & 1
				      \end{pmatrix}
	\]
	satisfies still property a) and the projection to the first column 
	gives an immersive map. Let us fix 
	such a $\mu$. For generic $\lambda \in \CC$
	the embedding
	\[
		t \longmapsto f(t) \cdot \begin{pmatrix}
					   1 & 0 \\
					   \mu & 1
				      \end{pmatrix}
				      \cdot \begin{pmatrix}
					   1 & \lambda \\
					   0 & 1
				      \end{pmatrix}
	\]
	still satisfies property a) and the projection to the first column 
	and the projection to the diagonal give immersive maps.
	Since any immersive morphism of $\CC$ to an irreducible affine 
	curve is birational, we can assume that $f$ satisfies properties a) and b).  
	Now, for generic $a \in \CC$ the embedding
	\begin{equation}
		\label{eq.first}
		\tag{\tr{$\ast\ast$}}
		t \longmapsto \begin{pmatrix}
					   1 & 0 \\
					   a & 1
				      \end{pmatrix}
				      \cdot f(t)
	\end{equation}
	satisfies properties a) and b)  and the singularities of the image 
	of the projection to the first column lie inside $\CC \times \CC^\ast$.
	Let us fix such an $a$.
	For generic $b \in \CC$ the embedding
	\begin{equation}
		\label{eq.second}
		\tag{\tr{$\ast\ast\ast$}}
		t \longmapsto \begin{pmatrix}
					   1 & b \\
					   0 & 1
				      \end{pmatrix}
				      \cdot
				      \begin{pmatrix}
					   1 & 0 \\
					   a & 1
				      \end{pmatrix}
				      \cdot f(t)		
	\end{equation}
	satisfies now the properties a) to c). Let $(p(t), q(t))^T$ be the first 
	column of the embedding~\eqref{eq.first}. Then the top left entry
	of the embedding~\eqref{eq.second} is given by $p(t) + b q(t)$.
	Now, if~\eqref{eq.second} satisfies properties a) to c),
	then~\eqref{eq.second} satisfies property d) 
	if and only if $p(t) + b q(t)$ has only simple roots. 
	However, this last
	condition is satisfied for generic $b$, since $p(t) + b q(t)$ has only simple
	roots if and only if for all $t$ the vector $(1, b)^T$ 
	lies not in the kernel of the matrix
	\[
		\begin{pmatrix}
			p(t) & q(t) \\
			p'(t) & q'(t)
		\end{pmatrix}
	\]
	and since this last matrix is invertible for generic $t$ and never vanishes.
	This finishes the proof.
\end{proof}
}

\begin{proof}[Proof of Proposition~\ref{prop.help}]
	\tr{Using Lemma~\ref{lem.starting} we can assume that
	$f$ satisfies the properties a) to d) of Lemma~\ref{lem.starting}.} 
	As a consequence of b) and c) we get that the map
	$t \mapsto (x(t), z(t), w(t))$ is a proper holomorphic embedding. 
	
	Let $t_1, \ldots, t_n$ be the roots of $x(t) = 0$ 
	\tr{(which are simple according to property d))}.
	After a reparametrization of $f$ of the form $t \mapsto t + a$ 
	one can assume that 
	$w(t_i) \neq w(t_j)$ for all $i \neq j$ and $t_i \neq 0$
	for all $i$.
	Let $a_i \in \CC$ such that $e^{-a_i} = - t_i z(t_i)$ and let 
	$b \colon \CC \to \CC$ be a polynomial map such that
	$b(w(t_i)) = a_i$ and $b'(w(t)) = 0$ for all $t$ with $x'(t) = 0$.
	After applying the holomorphic automorphism 
	\[
		\SL_2 \longrightarrow \SL_2 \, , \quad 
		\begin{pmatrix}
			x & y \\
			z & w
		\end{pmatrix} 
		\longmapsto
		\begin{pmatrix}
			x & e^{-b(w)}y \\
			e^{b(w)}z & w
		\end{pmatrix}
	\]
	we can assume that the embedding $f$ satisfies $y(t_i) = t_i$
	for all $i$ and $f$ still satisfies the properties a) to d).
	
	Let $\rho$ be the embedding $t \mapsto (x(t), y(t), z(t))$.
	Fix $x_0 \neq 0$ such that
	\begin{enumerate}[I)]
		\item $z(s) \neq 0$ and $x'(s) \neq 0$ for all $s \in x^{-1}(x_0)$ and
		\item the maps  $t \mapsto z(t)$ and $t \mapsto w(t)$ are 
			 injective on $x^{-1}(x_0)$.
	\end{enumerate}
	Let $\{ s_1, \ldots, s_n \} = x^{-1}(x_0)$.
	With the aid of Lemma~\ref{lem.interpolation} one can see that there exists
	a holomorphic function $c \colon \CC^2 \to \CC$ 
	that satisfies the following:
	\begin{enumerate}[i)]	
		\item for all $(x, z, w) \neq (x, z, w') \in \rho(\CC)$ we have
			$c(x, w) \neq c(x, w')$;
		\item for all $t$ with $x'(t) = 0$, the partial derivative
		         $D_w c$ vanishes in $(x(t), w(t))$;
		\item for all $i = 1, \ldots, n$ we have $c(0, w(t_i)) = 0$;
		\item for all integers $k, q$ and for all $2$-element sets
			$\{ i, j \} \neq \{ l, p \}$ we have
			\begin{align*}
				& \left[\log z(s_l) -\log z(s_p) +2\pi i q \right] \cdot
				\left[c(x_0, w(s_j)) - c(x_0, w(s_i))\right] \\
				& \neq 
				\left[\log z(s_i) -\log z(s_j)+2\pi i k\right] \cdot
				\left[c(x_0, w(s_p)) - c(x_0, w(s_l))\right] ;
			\end{align*}
		\item for all integers $k$ and for all $i \neq j$ we have
			\begin{align*}
				& \left[\log z(s_i) -\log z(s_j)+2\pi i k\right] \cdot
				\left[x'(s_i) c(x, w)'(s_j) - x'(s_j) c(x, w)'(s_i) \right] \\
				& \neq \left[ z'(s_i)x'(s_j) /z(s_i) -
				z'(s_j)x'(s_i) / z(s_j) \right] \cdot 
				\left[ c(x_0, w(s_j)) - c(x_0, w(s_i)) \right] \, .
		        \end{align*}
	\end{enumerate}
	Let $V \subseteq \CC^\ast$ be the largest subset such
	that for all $x_0 \in V$ the properties I) and II) are satisfied.
	By property a), the complement $\CC \setminus V$ is a closed
	discrete (countable) subset of $\CC$. The inequalities in
	iv) and v) are locally holomorphic in $x_0 \in V$ after a local
	choice of sections $s_1, \ldots, s_n$ of the covering
	$x^{-1}(V) \to V$ and a local choice of 
	the branches of the logarithms.
	Since $V$ is path-connected, one can now deduce
	that there exists a subset  
	$U \subseteq V$ such that $\CC \setminus U$
	is countable and for all $x_0 \in U$ the properties iv) and v) are satisfied.
	
	According to i) and c) there exists $\lambda \in \CC^\ast$
	such that for all $x_1 \in \CC \setminus U$ we have the following:
	If $(x_1, z, w) \neq (x_1, z', w') \in \rho(\CC)$, then 
	$e^{\lambda c(x_1, w)} z \neq e^{\lambda c(x_1, w')} z'$.
	Now, let $\varphi$ be the following holomorphic automorphism
	\[
		\SL_2 \longrightarrow \SL_2 \, , \quad
				\begin{pmatrix}
			x & y \\
			z & w
		\end{pmatrix} 
		\longmapsto
		\begin{pmatrix}
			x & e^{-\lambda c(x, w)}y \\
			e^{\lambda c(x, w)}z & w
		\end{pmatrix}
	\]
	and let $g = \tr{\varphi \circ f}$. 
	According to iii), $g$ satisfies property (1) of the proposition.
	Property ii) implies that \tr{$t \mapsto (g_{11}(t), g_{21}(t))$}
	is immersive. Clearly, \tr{$t \mapsto (g_{11}(t), g_{21}(t))$} is proper
	\tr{and $g$} satisfies property (5) of the proposition.
	By iii), it follows that $g$ satisfies property (4) 
	of the proposition and thus  \tr{$t \mapsto (g_{11}(t), g_{21}(t))$} 
	is bimeromorphic.
	By the choice of $\lambda$, it follows for $x_1 \notin U$
	that $g_{21}(t) \neq g_{21}(t')$ for all $t \neq t' \in x^{-1}(x_1)$.
	Since all $x_0 \in U$ satisfy iv) and v) 
	the image of $t \mapsto (g_{11}(t), g_{21}(t))$
	has only simple normal crossings, which have distinct first coordinates in 
	$\CC^2$. This implies properties (2) and (3) of the proposition.
\end{proof}

\begin{proof}[Proof of Theorem~\ref{thm.SL2}]
	Let $f \colon \CC \to \SL_2$ be an algebraic embedding. 
	We will prove that up to a holomorphic automorphism of $\SL_2$
	and up to \tr{an algebraic} reparametrization, $f$ is the same as
	the standard embedding $t \mapsto E_{12}(t)$.
	
	After applying
	a holomorphic automorphism of $\SL_2$ and performing
	\tr{an algebraic} reparametrization we can assume that
	$f$ satisfies properties (1) to (5) of Proposition~\ref{prop.help}.
	We denote 
	\[
		f(t) = \begin{pmatrix}
			x(t) & y(t) \\
			z(t) & w(t)
		\end{pmatrix} \, .
	\] 
	As usual,
	$\pi_1 \colon \SL_2 \to \CC^2$ denotes the projection
	onto the first column.
	Let $S$ be the (countable) closed discrete set of
	points $s \in \CC^2 \setminus \{ 0 \}$
	such that $(\pi_1 \circ f)^{-1}(s) = \{ s_1, s_2 \}$
	with $s_1 \neq s_2$, see property (2). For every $s$ in $S$,
	it holds that $y(s_1) \neq y(s_2)$, since $f$ is an embedding
	and since all simple normal crossings of
	the image of $\pi_1 \circ f$ lie inside $\CC^\ast \times \CC$
	due to property (4). Thus, we can choose $a_s \in \CC$ such that
	\[
		s_1 - e^{a_s} y(s_1) = s_2 - e^{a_s} y(s_2) \, .
	\]
	Let $\psi_1 \colon \CC \to \CC$ be a holomorphic
	function with $\psi_1(0) = 0$ such
	that for all $s \in S$ we have $\psi_1(x(s_1)) = a_s$.
	This function exists, since $x(s_1) = x(s_2) \neq 0$ for all $s \in S$ 
	(by property (4)),
	since $x((\pi_1 \circ f)^{-1}(S))$ is a closed analytic subset of $\CC$ 
	(by property (5)) 
	and since $x(s_1) \neq x(s'_1)$ for distinct $s$, $s'$ of $S$
	(by property (3)).
	Let $\alpha_1$ be the holomorphic automorphism of $\SL_2$
	defined by 
	\[
		\alpha_1 \begin{pmatrix}
			x & y \\
			z & w
		\end{pmatrix}
		=
	 	\begin{pmatrix}
			x & e^{\psi_1(x)} y \\
			e^{-\psi_1(x)} z & w
		\end{pmatrix}	\, .
	\]
	By composing $f$ with $\alpha_1$, we can assume that
	$s_1 - y(s_1) = s_2 - y(s_2)$ for all $s \in S$. The embedding
	$f$ still satisfies the properties (1) to (5).

	Let $\Gamma \subset \CC^2$
	be the image of $\pi_1 \circ f \colon \CC \to \CC^2$. By
	Remmert's proper mapping theorem
	\cite[Satz~23]{Re1957Holomorphe-und-mer},
	$\Gamma$ is a closed analytic subvariety of $\CC^2$.
	Now, using that $\pi_1 \circ f$ is immersive and
	$\Gamma$ has only simple normal crossings, 
	we get a holomorphic factorization
	\[	
		\xymatrix{
			\CC \ar[rd]_-{t \mapsto t-y(t)} \ar[r]^-{\pi_1 \circ f} &
			\Gamma \ar@{.>}[d]^-e \\
			& \CC \ar@{}[r]|<{\ \textrm{\normalsize.}} &
		}
	\]
	Using properties (1) and (4), it follows that the map
	\[
		\tilde{e} \colon \Gamma \longrightarrow \CC \, , \quad
		(x, z) \longmapsto \frac{e(x, z)}{x}	
	\]
	is holomorphic.
	Using Cartan's Theorem~B
	\cite[Th\'eor\`eme~B]{Ca1953Varietes-analytiqu}, we can
	extend $\tilde{e}$ to a holomorphic map $\psi_2 \colon \CC^2 \to \CC$.
	Let $\alpha_2$ be the holomorphic automorphism
	of $\SL_2$ defined by
	\[
		\alpha_2 \begin{pmatrix}
			x & y \\
			z & w
		\end{pmatrix}
		=
	 	\begin{pmatrix}
			x & y + x \psi_2(x, z) \\
			z & w + z \psi_2(x, z)
		\end{pmatrix}	\, .
	\]
	After applying the automorphism $\alpha_2$ to $f$ we can assume
	that $y(t) = t$. \tr{This implies that $x(0) w(0) = 1$. Let $p, q$ be 
	the holomorphic functions such that $p(t) t = x(t)- x(0)$ and
	$q(t) t = w(t)-w(0)$. After applying the holomorphic automorphism
	\[
		\begin{pmatrix}
			x & y \\
			z & w
		\end{pmatrix}
		\longmapsto
		\begin{pmatrix}
			w(0) & 0  \\
			0 & x(0)
		\end{pmatrix}		
		\begin{pmatrix}
			1 & 0  \\
			-q(y) & 1
		\end{pmatrix}
	 	\begin{pmatrix}
			x & y \\
			z & w
		\end{pmatrix}
		\begin{pmatrix}
			1 & 0 \\
			-p(y) & 1 
		\end{pmatrix}
	\]
	we can additionally assume that $w(t) = x(t) = 1$, which
	implies $z(t) = 0$. The statement follows now from 
	Remark~\ref{rem.reparametrization}.}
\end{proof}

\section{Acknowledgements}
The author would like to thank Peter Feller for many fruitful and stimulating
discussions. \tr{Many thanks go also to the referees for their helpful comments.}

\newcommand{\etalchar}[1]{$^{#1}$}
\providecommand{\bysame}{\leavevmode\hbox to3em{\hrulefill}\thinspace}
\providecommand{\MR}{\relax\ifhmode\unskip\space\fi MR }
\providecommand{\MRhref}[2]{%
  \href{http://www.ams.org/mathscinet-getitem?mr=#1}{#2}
}
\providecommand{\href}[2]{#2}

\end{document}